\newtheorem{theorem}{Theorem}
\newtheorem{lemma}{Lemma}
\newtheorem{proposition}{Proposition}
\newtheorem{corollary}{Corollary}
\newtheorem{remark}{Remark}
\newcommand{\al}{\alpha}
\newcommand{\om}{\omega}
\newcommand{\Om}{\Omega}
\def\R{{\mathbb R}}
\newcommand{\wt}{\widetilde}
\newcommand{\beq}{\begin{equation}}
\newcommand{\eeq}{\end{equation}}
\newcommand{\beqna}{\begin{eqnarray*}}
\newcommand{\eeqna}{\end{eqnarray*}}
\newcommand{\beqn}{\begin{equation*}}
\newcommand{\eeqn}{\end{equation*}}
\newcommand{\bp}{\begin{proof}}
\newcommand{\ep}{\end{proof}}
\newcommand{\bprop}{\begin{proposition}}
\newcommand{\eprop}{\end{proposition}}
\newcommand{\bt}{\begin{theorem}}
\newcommand{\et}{\end{theorem}}
\newcommand{\bex}{\begin{Example}}
\newcommand{\eex}{\end{Example}}
\newcommand{\bc}{\begin{corollary}}
\newcommand{\ec}{\end{corollary}}
\newcommand{\bl}{\begin{lemma}}
\newcommand{\el}{\end{lemma}}
\renewcommand{\ge}{\geqslant}
\renewcommand{\le}{\leqslant}
\begin{document}

\title[Positivity probability under the spectral gap condition]{On the probability that a stationary Gaussian process with spectral gap remains non-negative on a long interval}

\author[N. Feldheim, O. Feldheim, B. Jaye, F. Nazarov, S. Nitzan]{Naomi Feldheim, Ohad Feldheim, Benjamin Jaye, Fedor Nazarov, and Shahaf Nitzan}

\address{Naomi Feldheim, Department of Mathematics, Weizmann Institute of Science, Rehovot 76100, Israel}
\email{naomi.feldheim@weizmann.ac.il}
\address{Ohad Feldheim, The Hebrew university of Jerusalem, Jerusalem 9190401, Israel}
\email{ohad.feldheim@mail.huji.ac.il}
\address{Benjamin Jaye, Department of Mathematical Sciences, Clemson University, Clemson, SC 29631, USA}
\email{bjaye@clemson.edu}
\address{Fedor Nazarov, Department of Mathematical Sciences, Kent State University, Kent, OH 44242, USA}
\email{nazarov@math.kent.edu}
\address{Shahaf Nitzan, School of Mathematics, Georgia Institute of Technology, Atlanta GA 30332, USA}  
\email{shahaf.nitzan@math.gatech.edu}

\subjclass{60G10, 60G15}

\keywords{stationary Gaussian process, sign preservation probability}

\thanks{The authors are  
supported in part by the Science Foundation of the
Israel Academy of Sciences and Humanities grant 166/11,
the ERC advanced grant, and the U.S.~National Science Foundation grants DMS-1600726, DMS-1600239, 
DMS-1500881 respectively}

 \begin{abstract}
Let $f$ be a zero mean continuous stationary Gaussian process on ${\mathbb R}$ whose spectral measure vanishes in 
a $\delta$-neighborhood of the origin. Then the probability that $f$ stays non-negative on an interval 
of length $L$ is at most $e^{-c\delta^2L^2}$ with some absolute $c>0$ and the result is sharp 
without additional assumptions.
\end{abstract}

\maketitle

\section{Introduction}

Let $f$ be a zero mean continuous stationary Gaussian process on $\mathbb R$ and let $L>0$. We are interested
in good bounds for the probability
 that $f$ stays non-negative on some fixed interval
of length $L$ (since $f$ is stationary, this probability does not depend on the location of 
the interval, so we can always assume that our interval is just $[0,L]$). It has been recently observed in 
\cite{FF}, \cite{KK} and \cite{FFN} that in many interesting cases, one can get reasonably sharp bounds for this 
probability from both above and below in terms of the behavior of the spectral measure $\mu$ of the 
Gaussian process $f$ near the origin, usually under the assumption that $\mu$ has a non-trivial absolutely
continuous component. In the present paper, we will prove the following
\begin{theorem}
\label{spectralgap}
Let $f$ be a zero mean continuous stationary Gaussian process on $\mathbb R$ whose spectral measure $\mu$ has a gap,
i.e., 
$$
\mu([-\delta,\delta])=0\quad\text{for some }\delta>0\,.
$$ 
Then 
\begin{equation}
\mathcal P\{f\ge 0\text{ on }[0,L]\}\le e^{-c\delta^2L^2}
\label{main}
\end{equation}
with some absolute constant $c>0$. 
\end{theorem}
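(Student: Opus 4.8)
The plan is to first strip away the two scaling parameters. Replacing $f(t)$ by $f(t/\delta)$ turns the gap $\mu([-\delta,\delta])=0$ into the normalized gap $\mu([-1,1])=0$ and the interval $[0,L]$ into $[0,\delta L]$, while dividing $f$ by $\sqrt{r(0)}$ (here $r$ is the covariance of $f$) normalizes the pointwise variance to $r(0)=\mu(\mathbb{R})=1$. Neither operation changes the event $\{f\ge 0\}$, so it suffices to prove $\mathcal{P}(E)\le e^{-cL^2}$, where $E=\{f\ge 0\text{ on }[0,L]\}$, under the assumptions $\mu([-1,1])=0$ and $\mu(\mathbb{R})=1$; the target exponent $c(\delta L)^2$ is what reappears after undoing the rescaling.

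Next I would exploit the spectral gap as an \emph{exact orthogonality}. For any Schwartz function $h$ whose Fourier transform is supported in $(-1,1)$, the random variable $\int_{\mathbb{R}} f\,h$ is centered Gaussian with variance $\int |\widehat h|^2\,d\mu=0$, so $\int f\,h=0$ almost surely: $f$ is a.s. orthogonal to every band-limited function of type $1$. Two consequences drive the argument. First, the running integral $F(t)=\int_0^t f$ has, for all $s<t$, increment variance $\int \frac{4\sin^2(\lambda(t-s)/2)}{\lambda^2}\,d\mu(\lambda)\le 4\mu(\mathbb{R})=4$, \emph{uniformly in the length} $t-s$, since integration kills the high frequencies while the gap keeps $|\lambda|\ge 1$ away from the pole at the origin. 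Second, the conditional-mean profile $g(t)=\mathbb{E}[f(t)\mathbf{1}_E]$ is bounded, satisfies $g\ge 0$ on $[0,L]$ (on $E$ each $f(t)\ge 0$), and inherits the gap: for band-limited $h$ of type $1$ one has $\langle g,h\rangle=\mathbb{E}[\mathbf{1}_E\langle f,h\rangle]=0$, so $\widehat g$ vanishes on $(-1,1)$.

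These observations convert the probabilistic estimate into a deterministic extremal problem about $g$. On one side, Cauchy--Schwarz gives $\int_0^L g=\mathbb{E}[F(L)\mathbf{1}_E]\le \sqrt{\mathbb{E}F(L)^2}\,\sqrt{\mathcal{P}(E)}\le 2\sqrt{\mathcal{P}(E)}$, so a large $\mathcal{P}(E)$ would force $g$ to carry a large positive mass on $[0,L]$ while its global size stays controlled. On the other side $g$ is a \emph{high-pass} function: $\widehat g$ vanishes on $(-1,1)$, so in particular $\int_{\mathbb{R}}g=\widehat g(0)=0$, and the positive mass $\int_0^L g$ must be balanced by mass leaked outside $[0,L]$. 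The heart of the matter is to make this trade-off quantitative with the correct Gaussian exponent, i.e.\ to show that a function with spectral gap $(-1,1)$ that stays non-negative on $[0,L]$ has $\int_0^L g$ small by a factor $e^{-cL^2}$ relative to its total size. Combining such an inequality with the Cauchy--Schwarz bound on the total size and with the fact that $g(t)=\mathbb{E}[f(t)\mathbf{1}_E]$ cannot be uniformly small on $[0,L]$ unless $\mathcal{P}(E)$ is itself small, one closes the loop and obtains $\mathcal{P}(E)\le e^{-cL^2}$.

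I expect the main obstacle to be exactly this last, deterministic step: extracting the quadratic exponent $e^{-cL^2}$ rather than the merely exponential $e^{-cL}$. The linear rate is all one gets from soft arguments (bare orthogonality to band-limited functions, or cutting $[0,L]$ into unit blocks and invoking near-independence of the block integrals $\int_{I_j}f$). In fact the naive ``independent block'' heuristic \emph{over}-estimates $\mathcal{P}(E)$, because it discards the strong negative correlations that the gap forces on the increments of $F$ (their partial sums have variance bounded by $4$ no matter how many blocks one uses). Capturing those correlations---equivalently, proving the sharp uncertainty-type inequality that a one-signed high-pass function loses a Gaussian factor in $\delta L$---is the crux where the sharpness of the exponent is decided. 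I anticipate this requires a genuinely multiscale or Tur\'an/Remez-type analysis of functions with a spectral gap over long intervals, since, as the $1/2$ from any single half-space shows, no single linear functional of $f$ can produce more than a constant bound on the probability.
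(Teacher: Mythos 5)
Your proposal contains two genuine gaps, and you flag the first one yourself: the decisive step --- the deterministic uncertainty-type inequality asserting that a function $g$ with $\widehat g=0$ on $(-1,1)$ and $g\ge 0$ on $[0,L]$ must satisfy $\int_0^L g\le e^{-cL^2}\cdot(\text{global size of }g)$ --- is stated as a conjecture, not proved. Everything before it is correct but soft: that $g(t)=\mathcal E[f(t)\mathbf{1}_E]$ is bounded, non-negative on $[0,L]$, and high-pass, and that the increments of $F(t)=\int_0^t f$ have variance bounded uniformly in the increment length, are nice observations, but none of them produces more than an exponential rate; the entire quadratic exponent would have to come from the missing inequality. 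So what you have is a program, not a proof.

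Second, and more fundamentally, even granting the conjectured deterministic inequality, the loop does not close. Your Cauchy--Schwarz step gives $\int_0^L g\le 2\sqrt{\mathcal P(E)}$, which is an \emph{upper} bound on $\int_0^L g$, i.e.\ in the useless direction: to conclude $\mathcal P(E)\le e^{-cL^2}$ you would need a \emph{lower} bound $\int_0^L g\ge \psi(\mathcal P(E))$ with $\psi$ at worst polynomial --- your parenthetical claim that ``$g$ cannot be uniformly small on $[0,L]$ unless $\mathcal P(E)$ is small.'' No such bound holds under the hypotheses of the theorem: the paper stresses that nothing beyond the gap is assumed about $\mu$, and when the spectral measure is thin the finite-dimensional distributions of $f$ can be nearly degenerate, so the process conditioned on $E$ can be pinned arbitrarily close to $0$ on $[0,L]$, making $\int_0^L g$ arbitrarily small compared with $\mathcal P(E)$. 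Already for a single Gaussian variable $X$ of variance $\epsilon^2$ one has $\mathcal E[X\mathbf{1}_{X\ge 0}]=\epsilon/\sqrt{2\pi}\to 0$ while $\mathcal P(X\ge 0)=\frac 12$. This degeneracy is precisely the crux the paper's argument is built to survive: in the main lemma the least eigenvalue $\sigma^2$ of the covariance matrix of $N+1$ consecutive samples is extracted from a minimization over polynomials, the minimizing polynomial is rebuilt (factor by factor) into one with non-negative coefficients that is at most $e^{CN}$ times larger on the relevant half of the spectrum, giving a probability measure $\nu$ with $\mathcal E\bigl[\bigl(\int f\,d\nu\bigr)^2\bigr]\le e^{-6cn}\sigma^2$; then in the small-ball step the Gaussian density bound $\sigma^{-N-1}$ is paired with a cube of side $ne^{-cn}\sigma$, so $\sigma$ cancels and the probability is at most $(ne^{-cn})^{N+1}\le e^{-c'n^2}$ \emph{uniformly in the degeneracy}. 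A dyadic decomposition $\mu=\sum_a\mu_a$, convolution of the associated measures $\nu_a$, and conditioning on the complementary spectral components then assemble the bound $e^{-c\delta^2L^2}$ (your single rescaling to gap $(-1,1)$ also quietly discards the multiscale structure needed to handle spectra far from the origin). Your first-moment functional $g$ carries no analogue of this cancellation of $\sigma$, and that --- not only the unproven deterministic inequality --- is why the approach as written would fail.
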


We will show in Section \ref{sharpness} that this bound cannot be improved in general. In particular, one can 
prove a matching bound from below for any zero mean stationary Gaussian process whose spectral measure is 
compactly supported and has a non-trivial absolutely continuous component,
extending the results of \cite{KK} where it was done for a class of processes with discrete time. 

An analogue of Theorem \ref{spectralgap} (with the same proof) holds for stationary Gaussian processes 
on $\mathbb Z$.

The inequality (\ref{main}) itself is not new: it appeared in \cite{FFN} already. The novelty here is
that the gap condition is the only one we impose on the spectral measure $\mu$; no other a priori
assumptions of any kind are made about it. This seems to put our setup beyond the scope of the techniques 
used in \cite{FFN} despite the fact that our present argument follows the same approach.

\section{Basic facts about stationary Gaussian processes}

A Gaussian process on $\R$ is a mapping $f:\R\times\Om\to \R$ (where $(\Om,\mathcal F,\mathcal P)$
is a probability space) that is jointly measurable and satisfies the condition that for 
every $x_1,\dots,x_m\in\R$, the vector $(f(x_1,\cdot),\dots,f(x_m,\cdot))$ has a (possibly degenerate)
Gaussian distribution in $\R^m$. We shall always assume that our Gaussian processes have mean zero, i.e.,
$\mathcal E[f(x)]=0$ for all $x\in\R$. We say that $f$ is continuous if $f(\cdot,\om)$ is a continuous 
function on $\R$ almost surely (i.e., for $\mathcal P$-almost every $\om\in\Om$). We say that $f$ is 
stationary if for every $x_1,\dots,x_m\in\R$, the distribution of the vector 
$(f(x_1+x,\cdot),\dots,f(x_m+x,\cdot))$ does not depend on $x\in\R$. In what follows, we will often 
suppress the probability variable $\omega$ in the notation $f(x,\omega)$ and just write $f(x)$ instead.
Also, we will always assume that our process $f$ is not identically $0$.

If a Gaussian process $f$ is continuous and stationary, then its covariance kernel 
$K(x,y)=\mathcal E[f(x)f(y)]$ can be written as $K(x,y)=k(x-y)$ with some positive definite 
(in the sense that $\sum_{i,j=1}^m k(x_i-x_j)c_jc_j\ge 0$ for all $x_i\in\R$ and all real numbers $c_i$) continuous function
$k:\R\to\R$. By the Bochner theorem, there exists a non-negative symmetric with respect to the
origin finite measure $\mu$ on $\R$ such that 
$$
k(x)=\widehat\mu(x)=\int_{\R}e^{2\pi i xy}\,d\mu(y)\,.
$$
This measure $\mu$ is called the spectral measure of $f$. Conversely, given any finite symmetric measure $\mu$ on $\R$ that decays not too slowly near infinity, we can 
construct a unique continuous stationary Gaussian process $f$ on $\R$ whose spectral measure is $\mu$.

If $f_j$ are independent continuous stationary Gaussian processes with spectral measures $\mu_j$ 
and the series $\sum_j f_j$ converges uniformly on compact subsets of $\R$ almost surely, then
its sum is a continuous stationary Gaussian process whose spectral measure is $\mu=\sum_j \mu_j$. Conversely,
if $f$ is a continuous stationary Gaussian process with spectral measure $\mu$ and $\mu$ is 
represented as a countable sum $\mu=\sum_j\mu_j$, then, under some mild extra assumptions, $f$ can
be decomposed into a uniformly converging on compact subsets of $\R$ sum of independent
continuous stationary Gaussian processes $f_j$ with spectral measures $\mu_j$.

We refer the reader who wants to learn more about continuous and smooth Gaussian processes to 
the Appendix in \cite{NS} and books and articles mentioned therein.

The words ``decaying not too slowly'' and ``under some mild extra assumptions'' in the above two paragraphs 
can be given precise meaning by stating the corresponding (nearly) optimal assumptions explicitly.
However, the following remark shows that for the purpose of proving Theorem \ref{spectralgap}, we do not need the full 
strength of the corresponding delicate theory, but can get away with very crude sufficient conditions instead. 

\begin{remark}\rm
Let $f$ be any continuous stationary Gaussian process with some spectral measure $\mu$ 
(possibly decaying very slowly). 
Then, for every non-negative compactly supported smooth mollifier $\eta:\R\to \R$, the convolution
$f*\eta$ is a continuous (and even smooth) stationary Gaussian process whose spectral measure
is $|\widehat\eta|^2\mu$. 

If $\eta$ is supported on $[-\varepsilon,0]$, then the condition
$f\ge 0$ on $[0,L]$ implies that $f*\eta\ge 0$ on $[0,L-\varepsilon]$, so
$$
\mathcal P\{f\ge 0\text{ on }[0,L]\}\le\mathcal P\{f*\eta\ge 0\text{ on }[0,L-\varepsilon]\}\,. 
$$
\end{remark}
Since 
$$
(|\widehat\eta|^2\mu)(\mathbb R\setminus[-R,R])=\int_{\mathbb R\setminus[-R,R]}|\widehat\eta|^2\,d\mu
\le \mu(\R)\max_{\mathbb R\setminus[-R,R]}|\widehat\eta|^2\,,
$$
we can thus reduce the general case to the case when the spectral measure $\mu$ of the process
has the property that $\mu(\R\setminus[-R,R])$ decays faster than any power of $R$ as $R\to\infty$,
in which case all our claims about convergence, decompositions, existence, etc. in the course of the proof
of Theorem \ref{spectralgap}
become totally routine. 
 
Finally a few words should be said about stationary 
Gaussian processes with compactly supported spectral measures.
In this case the Gaussian process represents a random entire function $f(z)$ on the whole complex plane, its
derivatives are also (zero mean) stationary jointly Gaussian processes on $\mathbb R$, and we can pass to the limit
in the usual spectral measure identity
$$
\mathcal E\Bigl[\Bigl|\sum_{j}c_j f(x_j)\Bigr|^2\Bigr]=
\int_{\mathbb R}\Bigl|\sum_k c_j e^{2\pi i x_jy}\Bigr|^2\,d\mu(y)\,,\quad c_j\in\mathbb C
$$
for appropriate difference ratios to get
$$
\mathcal E\Bigl[\Bigl|\sum_{k=0}^n a_k f^{(k)}(0)\Bigr|^2\Bigr]=
\int_{\mathbb R}\Bigl|\sum_{k=0}^n a_k(2\pi i y)^k\Bigr|^2\,d\mu(y)\,,\quad a_k\in\mathbb C.
$$
The classical reference for all these facts is \cite{B}.

\section{The idea of the proof}

The key steps of our approach are easiest to explain in the discrete setting when we have a stationary 
Gaussian process $f$ on $\mathbb Z$ instead of $\mathbb R$ and the task is to estimate the probability
$\mathcal P\{f(0),f(1),\dots,f(L)\ge 0\}$. In this case the corresponding spectral measure $\mu$ lives 
on the unit circle $\mathbb T$, which can be identified with the interval $[-\frac 12,\frac 12]$ by the
standard mapping $[-\frac 12,\frac 12]\ni t\mapsto e^{2\pi it}\in\mathbb T$. The spectral gap condition
means that $\mu(\mathcal L_\delta)=0$ for some arc $\mathcal L_\delta=\{e^{2\pi it}\,:\,|t|\le\delta\}$.

The general idea is just to create a linear combination $g=\sum_{k=0}^L\beta_k f(k)$ with positive
coefficients $\beta_k$ summing to $1$ such that the quantity
$$
\mathcal E[g^2]=\int_{\mathbb T}\Bigl|\sum_{k=0}^L\beta_k z^k\Bigr|^2\,d\mu(z)
$$
is small while many of the coefficients $\beta_k$ are relatively
large. Then we can say that, in the event of positivity
we are interested in, many of the Gaussian random variables $f(k)$ should be small simultaneously. If we could in addition show that the covariance matrix of some large subset of them is sufficiently non-degenerate, 
we would be able to conclude that the overall positivity probability is rather small.

This scheme is usually implemented in two separate steps. First one chooses a polynomial 
$P(z)=\sum_{k=0}^L\beta_k z^k$ with $\beta_k\ge 0, \sum_{k=0}^L\beta_k=1$ 
such that $\int_{\mathbb T}|P|^2\,d\mu$ is small 
under the spectral gap condition. A simple universal example of such a polynomial is just
$P_{L,\text{simple}}(z)=\left(\frac{1+z}{2}\right)^L$, which gives 
$\int_{\mathbb T}|P|^2\,d\mu\le e^{-2cL}\mu(\mathbb T)$ with some $c=c(\delta)>0$.
It is slightly inconvenient to work with $P_{L,\text{simple}}$ directly 
because its coefficients decay rather fast when 
going away from the middle. This drawback can be easily fixed by considering 
$$
P_{L,\text{universal}}(z)=\frac{1+z+\dots+z^{2L'}}{2L'+1}P_{L',\text{simple}}(z)
$$ 
with $L'\approx \frac L3$ instead. This new polynomial is also uniformly exponentially small on 
$\mathbb T\setminus\mathcal L_{\delta}$, so we still have $\mathcal E[g^2]\le e^{-2cL}\mu(\mathbb T)$, 
while it has $L'+1$ coefficients in the middle equal to $\frac 1{2L'+1}\ge \frac 1L$.

Next one tries to find a subset $\Lambda\subset\{0,1,\dots, L\}$ for which the corresponding
coefficients $\beta_k$ are not too small and the covariance matrix of the family 
$\{f(k)\,:\,k\in\Lambda\}$ is reasonably non-degenerate. For instance, if we could find a subset $\Lambda$ 
of cardinality $|\Lambda|\ge cL$ near the middle of $\{0,1,\dots,L\}$ 
such that the corresponding $f(k)$ are nearly independent (i.e., their 
covariance matrix has a bounded condition number), then we would write something like
\begin{multline}
\label{naive}
\mathcal P\{f(0),f(1),\dots,f(L)\ge 0\}
\\
\le 
\mathcal P\{g\ge e^{-\frac c2L}\sqrt{\mu(\mathbb T)}\}
+\mathcal P\{f(k)\le \beta_k^{-1}e^{-\frac c2L}\sqrt{\mu(\mathbb T)}\text{ for every }k\in\Lambda\}
\\
\le \exp\left(-\frac 12e^{cL}\right)+\left(CLe^{-\frac c2 L}\right)^{|\Lambda|}\
\le e^{-c'L^2}
\end{multline}
and be done.

This second step can, indeed, be carried out as said for spectral
measures $\mu$ with non-trivial absolutely continuous
component but it fails miserably if the support of $\mu$ is thin, in which case large covariance matrices
can be as degenerate as one wants. However it is rather clear that under such circumstances our universal
polynomial $P_{L,\text{universal}}$ does a poor job of minimizing $\int_{\mathbb T}|P|^2\,d\mu$
and we should better replace it by something depending on $\mu$.

This is exactly what we are doing below. We {\em start} with looking at how degenerate the 
covariance matrix $A$ of $f(0),f(1),\dots,f(\ell)$ may be for a given $\ell$ (which the reader should
think of as a small multiple of $L$). Note that, due to the stationarity, any $\ell+1$ consecutive
values of $f$ will have the same covariance matrix. 

The least eigenvalue $\sigma^2$ of $A$ is given by
$$
\sigma^2=\inf\Bigl\{\int_{\mathbb T}|P|^2\,d\mu\,:\,P(z)=\sum_{k=0}^\ell a_kz^k,
\ a_k\in\mathbb C, \sum_{k=0}^\ell |a_k|^2=1\Bigr\}\,,
$$
and the standard density bound immediately yields the inequality
$$
\mathcal P\{0\le f(0),\dots, f(\ell)<\varepsilon\}\le \frac{\varepsilon^{\ell+1}}{\sqrt{\operatorname{det} A}}
\le \left[\frac\varepsilon\sigma\right]^{\ell+1}\,.
$$
Next we rebuild the minimizing polynomial $P$ into a polynomial $Q$ of degree $C\ell$ with positive
coefficients summing up to $1$ so that $|Q|\le e^{C\ell}|P|$ on $\mathbb T\setminus\mathcal L_\delta$.
Thus we get $\int_{\mathbb T}|Q|^2\,d\mu \le e^{2C\ell}\sigma^2$. Multiplying $Q$ by our universal 
polynomial $P_{L-C\ell,\text{universal}}$, we 
obtain a polynomial $\widetilde P$ of degree $L$ with $\ell+1$ consecutive 
big coefficients in the
middle satisfying $\int_{\mathbb T}|\widetilde P|^2\,d\mu \le e^{-2cL}\sigma^2$, provided that 
the ratio $\ell/L$ is small enough.

Finally, we run a chain of inequalities similar to (\ref{naive}) with the cutoff level 
$e^{-\frac c2L}\sigma$ instead of $e^{-\frac c2L}$ for $g$ and get the second term bounded by
$\left(\frac{Ce^{-\frac c2L}\sigma}{\sigma}\right)^{\ell+1}\le e^{-c'L^2}$.

This ``cancellation of $\sigma$ miracle'' is the only real novelty we introduce into the standard scheme
for the discrete case. The continuous case can be reduced to the discrete one with $\delta=\frac 14$ 
via dyadic spectral
decomposition, appropriate conditioning, and some other fairly routine trickery. For technical reasons we 
prefer to aim directly at the continuous case below, so the exact choices of parameters
we shall make and the notation we shall use for them will be slightly different from those in this
section. However, we hope that this short outline will help the reader to see what is going on when 
wading through various tangled details of the argument.

\section{The main lemma}

The following lemma is the basis for all our further considerations and may be of independent
interest as well.

\begin{lemma}
There exist $n_0\in\mathbb N$ and $c,c'>0$ such that if $f$ is a continuous stationary Gaussian process on 
$\R$ whose spectral measure $\mu$ is supported on $[-\frac 12,\frac 12]\setminus[-\frac 14,\frac 14]$, then for
every $n\ge n_0$, there exist a number $\sigma\in [0,\sqrt{\mu(\R)}]$ and a non-negative measure 
$\nu=\sum_{k=0}^n \beta_k\delta_k$ depending on $\mu$ and $n$ such that
$\nu(\R)=1$, 
$$
\mathcal E\left[\Bigl(\int_{\R}f\,d\nu\Bigr)^2\right]\le e^{-6cn}\sigma^2\,,
$$
and for every deterministic function $\varphi:\R\to\R$, we have
$$
\mathcal P\Bigl\{f+\varphi\ge 0\text{ on }\operatorname{supp}\nu, 
\int_{\R}(f+\varphi)\,d\nu\le e^{-cn}\sigma\Bigr\}
\le e^{-c'n^2}\,.
$$
Here, as usual, $\delta_k$ stands for the Dirac unit point mass at $k$.
\label{mainlem}
\end{lemma}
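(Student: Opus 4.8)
The plan is to split the lemma into a probabilistic half, giving the positivity bound, and a constructive half, producing $\nu$; the two are joined only through the number $\sigma$. Fix $\ell=\lfloor\alpha n\rfloor$ with a small universal $\alpha>0$ to be pinned down last, and let $A=A_\ell$ be the covariance matrix of $(f(0),\dots,f(\ell))$, a real symmetric Toeplitz matrix with every diagonal entry equal to $\mu(\mathbb R)$. Put $\sigma^2=\lambda_{\min}(A)$; then $\sigma^2\le\mu(\mathbb R)$, so $\sigma\in[0,\sqrt{\mu(\mathbb R)}]$, and since each eigenvalue of $A$ is at least $\sigma^2$ we get the determinant bound $\det A\ge\sigma^{2(\ell+1)}$. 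Writing $z=e^{2\pi i t}$ puts $\mu$ on the arc $\{\tfrac14\le|t|\le\tfrac12\}$, that is on $\{\operatorname{Re}z\le0\}$, and identifies $\sigma^2$ with $\min\{\int_{\mathbb T}|P|^2\,d\mu:\ P(z)=\sum_{k=0}^\ell a_kz^k,\ \sum_k|a_k|^2=1\}$. Fix a minimizer $P$; as $A$ is real, $P$ may be chosen with real coefficients, so its roots are real or occur in conjugate pairs.

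For the probabilistic half, suppose $\nu=\sum_{k=0}^n\beta_k\delta_k$ is a probability measure whose polynomial $\widetilde P(z)=\sum_k\beta_kz^k$ obeys $\int_{\mathbb T}|\widetilde P|^2\,d\mu\le e^{-6cn}\sigma^2$ and has a block $\Lambda$ of $\ell+1$ consecutive central indices with $\beta_k\ge\tfrac1n$. The first asserted inequality is then immediate, because $\mathcal E[(\int f\,d\nu)^2]=\int_{\mathbb T}|\widetilde P|^2\,d\mu$. For the second, observe that on the event in question each term $\beta_k(f(k)+\varphi(k))$ is non-negative and their sum is at most $e^{-cn}\sigma$, forcing $0\le f(k)+\varphi(k)\le ne^{-cn}\sigma$ for every $k\in\Lambda$. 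By stationarity $(f(k))_{k\in\Lambda}$ has covariance $A$, so the standard Gaussian density bound confines the shifted vector $(f(k)+\varphi(k))_{k\in\Lambda}$ to a box of volume $(ne^{-cn}\sigma)^{\ell+1}$ with probability at most $(\det A)^{-1/2}(ne^{-cn}\sigma)^{\ell+1}\le(ne^{-cn})^{\ell+1}$; here the powers of $\sigma$ cancel, which is the whole point. For $n\ge n_0$ this is $\le e^{-c'n^2}$. If instead $\sigma=0$, the same event forces $f(k)=-\varphi(k)$ throughout $\operatorname{supp}\nu$, which has probability $0$ because $f\not\equiv0$.

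It remains to construct such a $\nu$. First I rebuild $P$ into a polynomial $Q$ with non-negative coefficients, $Q(1)=1$, $\deg Q\le C\ell$, and $|Q|\le e^{C\ell}|P|$ on $\operatorname{supp}\mu$, so that $\int_{\mathbb T}|Q|^2\,d\mu\le e^{2C\ell}\sigma^2$. Factoring $P=a_\ell\prod_j(z-z_j)$, I replace each factor by one with non-negative coefficients that dominates it on the arc up to a bounded constant: a negative real root keeps its factor $(z+|z_j|)$; a conjugate pair near the arc, with radial projection $e^{2\pi i t_j}$, becomes $z^2-2\cos(2\pi t_j)z+1$, whose coefficients are non-negative exactly because $\cos(2\pi t_j)\le0$ on the arc; and any root bounded away from the arc becomes the constant $\operatorname{dist}(z_j,\text{arc})$. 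Multiplying out and normalizing at $z=1$ produces $Q$. I then set $\widetilde P=Q\cdot P_{n-\deg Q,\,\text{universal}}$; both factors equal $1$ at $z=1$, so $\widetilde P(1)=1$. The universal factor is at most $e^{-2c_0n}$ on the arc, with $c_0=c_0(\tfrac14)$, and has about $n/3$ central coefficients $\ge\tfrac1n$; multiplied by the non-negative $Q$ these yield a central block of at least $\ell+1$ coefficients of $\widetilde P$ that are still $\ge\tfrac1n$. Hence $\int_{\mathbb T}|\widetilde P|^2\,d\mu\le e^{-4c_0n}e^{2C\ell}\sigma^2$, and choosing $\alpha$ small and then $c$ small makes this $\le e^{-6cn}\sigma^2$, as required.

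The step I expect to be the real obstacle is the normalization in $P\mapsto Q$: keeping $|Q|\le e^{C\ell}|P|$ on the arc while insisting that $Q(1)=1$ forces a lower bound of the shape $|a_\ell|\prod_{\text{far }j}\operatorname{dist}(z_j,\text{arc})\ge e^{-O(\ell)}$. I plan to secure it from two facts: first, $\operatorname{dist}(z_j,\text{arc})\gtrsim\max(1,|z_j|)$ for roots away from the arc, so that the distances of the large roots compensate for the smallness they force on $a_\ell$; and second, the coefficient estimate $|a_k|\le\binom{\ell}{k}M(P)$, which together with $\max_k|a_k|\ge(\ell+1)^{-1/2}$ yields $M(P)\ge e^{-O(\ell)}$ for the Mahler measure. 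Handling the radial-projection error for conjugate pairs sitting close to the arc in the pointwise domination $|Q|\le e^{C\ell}|P|$ is the remaining delicate point; the balancing of the universal-polynomial estimate in the last paragraph is routine and already sketched in the outline.
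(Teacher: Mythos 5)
Your architecture is the same as the paper's: take $\sigma^2$ to be the least eigenvalue of the covariance matrix of $\ell+1$ consecutive samples (equivalently the minimum of $\int|P|^2\,d\mu$ over unit-norm coefficient vectors), rebuild the minimizer into a polynomial with non-negative coefficients normalized at $z=1$, multiply by the universal polynomial to create a flat central block of coefficients $\ge\frac1n$, and exploit the cancellation of $\sigma$ between the cutoff level $e^{-cn}\sigma$ and the density bound $(\det A)^{-1/2}\le\sigma^{-(\ell+1)}$. Your probabilistic half is complete and correct. Moreover, the two points you flag as delicate are in better shape than you fear: the Mahler-measure route to the normalization works (the classical bounds $|a_k|\le\binom{\ell}{k}M(P)$ and $\max_k|a_k|\ge(\ell+1)^{-1/2}$ give $M(P)\ge e^{-O(\ell)}$, which is the same estimate the paper extracts more directly by writing exterior-root factors as $1-z_kz$, so that $|a|\ge 2^{-N}$ falls out of $\int_{\mathbb T}|P|^2\,dy=1$), and the radial-projection domination is a one-liner, since for $|z|=1$ one has $|z-e^{i\arg z_j}|\le|z-z_j|+\bigl|1-|z_j|\bigr|\le 2|z-z_j|$ because $|z-z_j|\ge\bigl|1-|z_j|\bigr|$.

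The genuine gap is in your case analysis of the roots. A conjugate pair can lie arbitrarily close to $\mathbb T_-=\{|z|=1,\ \operatorname{Re}z\le 0\}$ while having argument in $(\frac{\pi}{3},\frac{\pi}{2})$ --- for instance $z_j$ just below $i$, rotated slightly clockwise. For such a pair your recipe fails on both sides: the radial projection $e^{2\pi it_j}$ lies off $\mathbb T_-$, so $\cos(2\pi t_j)>0$ and the quadratic $z^2-2\cos(2\pi t_j)z+1$ has a negative middle coefficient; while the fallback replacement by the constant $\operatorname{dist}(z_j,\mathbb T_-)$, though it preserves the pointwise domination, inserts a factor $\operatorname{dist}(z_j,\mathbb T_-)$ --- which can be arbitrarily small --- into $R(1)$, so the normalization cost is no longer $e^{O(\ell)}$ (your inequality $\operatorname{dist}(z_j,\mathbb T_-)\gtrsim\max(1,|z_j|)$ is exactly what breaks in this sector). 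The repair is easy: project such roots to the \emph{nearest} point $e^{i\theta^*}$ of $\mathbb T_-$ rather than radially; then $\theta^*\in[\frac{\pi}{2},\pi]$ forces non-negative coefficients, and $|z-e^{i\theta^*}|\le|z-z_j|+\operatorname{dist}(z_j,\mathbb T_-)\le 2|z-z_j|$ on $\mathbb T_-$. The paper sidesteps the whole case split with one uniform device: whenever $\operatorname{dist}(z_k,\mathbb T_-)<\frac12$ one has $|\arg z_k|\ge\frac{\pi}{3}$, hence $0$ lies in the convex hull of $1,z_k,z_k^2,z_k^3$, producing a cubic $U$ with non-negative coefficients, $U(1)=1$, $U(z_k)=0$, and $|U(\zeta)|\le 3|\zeta-z_k|$ on the circle --- covering precisely the sector your projection misses. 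With that repair, and your normalization sketch actually written out, your argument closes and is essentially the paper's proof.
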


\begin{proof}
Let $n\in \mathbb N$ be sufficiently large. Fix $N\in[1,n]$ to be chosen later (the reader
should think of $N$ as of a small constant multiple of $n$) and consider the minimization
problem 
$$
\int_{\R}|P(e^{2\pi iy})|^2\,d\mu(y)\to\min, 
\qquad P(z)=\sum_{k=0}^N a_kz^k,\quad a_k\in\mathbb C, \quad \sum_{k=0}^N |a_k|^2=1\,.
$$
Let $P$ be a minimizing polynomial and let $\sigma^2$ be the value of the minimum (since $P(z)=1$ is an
admissible polynomial, we certainly have $\sigma^2\le\mu(\R)$). The polynomial $P(z)$ can be factored as
$$
P(z)=a\prod_{k=1}^N L_k(z)
$$
where $a\in\mathbb C$ and each $L_k(z)$ is a linear polynomial corresponding to a single root of $P$.  We
will put $L_k(z)=z-z_k$ for the roots of $P$ inside the unit disk and 
$L_k(z)=1-z_k z$ for the roots of $P$ outside the unit disk.
Note that this way we always have $|z_k|\le 1$ and, thereby, $|L_k(z)|\le 2$ for $|z|=1$; so
$$
1=\int_{-\frac 12}^{\frac 12}|P(e^{2\pi iy})|^2\,dy\le 2^{2N}|a|^2\,,
$$
whence $|a|\ge 2^{-N}$.

We will now replace each factor $L_k(z)$ by a polynomial $\wt L_k(z)$ of low degree with positive 
coefficients summing to $1$ (or, equivalently, satisfying $\wt L_k(1)=1$) 
so that $|\wt L_k(z)|\le 3|L_k(z)|$ for all $z$ on the left unit semicircle
$\mathbb T_-=\{z:|z|=1,\operatorname{Re}z\le 0\}$\,. Consider two cases

\leftline{\textbf{Case 1:} $\operatorname{dist}(z_k,\mathbb T_-)\ge \frac 12$.} 

In this case we just put $\wt L_k(z)=1$. Then on $\mathbb T_-$, we have $|\wt L_k(z)|\le 2|L_k(z)|$. 

\leftline{\textbf{Case 2:} $\operatorname{dist}(z_k,\mathbb T_-)< \frac 12$.} 
In this case the absolute value of the argument of $z_k$ is at least $\frac\pi 3$ and, therefore, $0$ 
is in the convex hull of $1,z_k,z_k^2,z_k^3$, so there exist $\alpha_0,\dots,\alpha_3\ge 0$ with 
$\alpha_0+\dots+\alpha_3=1$ such that $\alpha_0+\alpha_1z_k+\alpha_2z_k^2+\alpha_3z_k^3=0$.
Let $U(z)=\alpha_0+\alpha_1z+\alpha_2z^2+\alpha_3z^3$. Notice that $U(z_k)=0$ and $|U'(z)|\le 3$
in the unit disk, so $|U(\zeta)|\le 3|\zeta-z_k|$ if $|\zeta|=1$.

If $L_k(z)=z-z_k$, put 
$\wt L_k(z)=U(z)$. Then
$|\wt L_k(z)|=|U(z)|\le 3|z-z_k|=3|L_k(z)|$ on $\mathbb T_-$.

If $L_k(z)=1-z_kz$, put $\wt L_k(z)=z^3U(1/z)$. Then
$|\wt L_k(z)|=|U(1/z)|\le 3\left|\frac 1z-z_k\right|=3|L_k(z)|$ on $\mathbb T_-$.

Now put $\wt P(z)=\prod_{k=1}^N \wt L_k(z)$. Note that $\wt P$ is a polynomial of degree at most $3N$ 
with positive coefficients summing to $1$. Moreover, on $\mathbb T_-$, one has 
$$
|\wt P|=\prod_{k=1}^N|\wt L_k|\le 3^N \prod_{k=1}^N|L_k|\le\frac{3^N}{|a|}|P|\le 6^N|P|\,.
$$

Let $m\in \mathbb N$. Consider the polynomial 
$$
Q(z)=\frac{1}{m+4N+1}(1+z+\dots+z^{m+4N})\left(\frac{1+z}2\right)^m\wt P(z)=\sum_{k\ge 0}\beta_k z^k\,.
$$
This polynomial still has non-negative coefficients summing up to $1$ but, 
since $|1+z|\le\sqrt 2$ on $\mathbb T_-$, it satisfies the bound
$$
|Q(z)|\le 6^N 2^{-m/2}|P(z)|,\quad z\in\mathbb T_-\,.
$$
Next, the degree of $Q$ is at most $2m+7N$. 
At last, notice that the coefficients of $Q$ can be obtained by convolving the coefficients of 
the polynomial $\left(\frac{1+z}2\right)^m\wt P(z)$ of degree at most $m+3N$ with the coefficients 
of $\frac{1}{m+4N+1}(1+z+\dots+z^{m+4N})$, which form a flat sequence of length $m+4N+1$.  
Thus the coefficients of $Q$ with indices from 
$m+3N$ to $m+4N$ are equal to $\frac{1}{m+4N+1}$ (the common value of the coefficients
of $\frac{1}{m+4N+1}(1+z+\dots+z^{m+4N})$, which is $\frac{1}{m+4N+1}$, times 
the {\em full} sum of the coefficients of $\left(\frac{1+z}2\right)^m\wt P(z)$, which is $1$).

Choosing $m=8N$ and $N=\left\lfloor \frac n{23}\right\rfloor$, say, and putting $\nu=\sum_{k\ge 0}\beta_k\delta_k$,
(here, as above, $\beta_k$ are the coefficients of the polynomial $Q$) we see that $\nu$ is supported on $\{0,1,\dots,n\}$ and 
$$
%\begin{multline*}
\mathcal E\left[\Bigl(\int_{\R}f\,d\nu\Bigr)^2\right]
=\int_{\R}|Q(e^{2\pi iy})|^2\,d\mu(y)\le 2^{-2N}\int_{\R}|P(e^{2\pi iy})|^2\,d\mu(y)
%\\
=2^{-2N}\sigma^2\le e^{-6cn}\sigma^2
%\end{multline*}
$$
with $c=\frac{\log 2}{100}$, say, provided that $n$ is not too small. Let us now fix this
value of $c$ and estimate the probability
$
\mathcal P\Bigl\{f+\varphi\ge 0\text{ on }\operatorname{supp}\nu, 
\int_{\R}(f+\varphi)\,d\nu\le e^{-cn}\sigma\Bigr\}
$. 

Notice that on the event in question, for $k=m+3N,\dots,m+4N$, we must have 
$$
0\le f(k)+\varphi(k)\le \frac{1}{\beta_k}\int_{\mathbb R}(f+\varphi)\,d\nu
\le (m+4N+1)e^{-cn}\sigma\le ne^{-cn}\sigma\,.
$$
If $\sigma=0$, then this implies that the probability we are interested in is just $0$.
Otherwise, let $A$ be the covariance matrix of the vector $F=(f(m+3N),\dots,f(m+4N))$. By the stationarity of $f$,
it is the same as the covariance matrix of the vector $(f(0),\dots,f(N))$. It follows immediately 
from the definition of the spectral measure and the construction of $\sigma$ that the least eigenvalue
of $A$ is $\sigma^2$. Thus, the density of the distribution of the Gaussian vector $F$ in $\R^{N+1}$
 is bounded by
$$
(2\pi)^{-\frac{N+1}2}(\det A)^{-\frac 12}\le 1\cdot\sigma^{-N-1}=\sigma^{-N-1}\,.
$$
On the other hand, on the event under consideration, $F$ belongs to a cube with sidelength $ne^{-cn}\sigma$ 
whose Euclidean volume is $(ne^{-cn}\sigma)^{N+1}$. Hence, the probability in question is at most 
$(ne^{-cn})^{N+1}\le e^{-c'n^2}$, provided that $n$ is not too small. The lemma is completely proved. 
\end{proof}

Let us make two remarks:

\begin{remark}\rm
Considering the process $f(x/a)$ instead of $f(x)$ with some $a>0$, we can immediately generalize 
this result to the case when the spectral measure $\mu$ is supported on $[-\frac a2,\frac a2]\setminus[-\frac a4,\frac a4]$. In this case the measure $\nu$ will be supported on the set $\{0,\frac 1a,\dots,\frac na\}$ but the rest of the formulation of Lemma \ref{mainlem} will remain exactly the same; in particular, the constants $n_0$, $c$ and $c'$ will not depend on $a$ in any way.
\label{rema}
\end{remark}

\begin{remark}\rm
The argument in the proof of Lemma \ref{mainlem} applies with some minor changes to the case when 
the spectral gap is $[-\delta,\delta]$ with some small $\delta>0$ instead of $[-\frac 14,\frac 14]$.
This allows one to almost immediately get the bound $e^{-c(\delta)n^2}$ with some $c(\delta)>0$
for the positivity probability in the discrete case. However, the dependence of $c(\delta)$ on 
$\delta$ one could get on this way would be suboptimal, so we will not use this most direct 
approach but, instead, will resort to a more elaborate scheme that would allow us to treat the
continuous case and spectral measures with unbounded supports as well. 
\label{rema2}
\end{remark}

\section{The dyadic decomposition of the spectral measure and the proof of Theorem \ref{spectralgap}}

Let $f$ be any continuous stationary Gaussian process with any spectral measure $\mu$ that decays not
too slowly at infinity and satisfies the gap condition $\mu([-\delta,\delta])=0$. Note that the positivity probability is never greater than $\frac 12$, which is the probability that $f$ is non-negative at a single
point, so it will suffice to prove inequality (\ref{main}) under the assumption that $\delta L$ is greater than
some fixed absolute constant.

We start with the decomposition $\mu=\sum_{a}\mu_a$ where $a$ runs over the numbers of the kind $2^k\delta$, $k\ge 2$, and $\mu_a$ is just the part of the measure $\mu$ supported on 
$[-\frac a2,\frac a2]\setminus[-\frac a4,\frac a4]$. This decomposition of the spectral measure corresponds to a decomposition of the continuous stationary Gaussian process $f$ into the sum of independent
processes $f_a$. For each $a$, fix an integer $n_a\ge n_0$ to be chosen later.

By Remark \ref{rema}, for each $a$, we can find a non-negative measure $\nu_a$ of total
mass $1$ supported on $\{0,\frac 1a,\dots,\frac{n_a}a\}$ and a number $\sigma_a\in[0,\sqrt{\mu_a(\R)}]$ such that
$$
\mathcal E\left[\Bigl(\int_{\R}f_a\,d\nu_a\Bigr)^2\right]\le e^{-6cn_a}\sigma_a^2\,,
$$
and for every deterministic function $\varphi_a:\R\to\R$, we have
$$
\mathcal P\Bigl\{f_a+\varphi_a\ge 0\text{ on }\operatorname{supp}\nu_a, 
\int_{\R}(f_a+\varphi_a)\,d\nu_a\le e^{-cn_a}\sigma_a\Bigr\}
\le e^{-c'n_a^2}\,.
$$
Since $f_a$ is stationary, the same inequalities hold for any shift of the measure $\nu_a$. Consider
now the (countably infinite in general) convolution $\nu=*_a\nu_a=\nu_{4\delta}*\nu_{8\delta}*\dots$. 
Since each measure $\nu_a$ is non-negative and satisfies $\nu_a(\mathbb R)=1$, this convolution is well-defined
and supported
on $[0,L]$, provided that
\begin{equation}
\sum_a\frac {n_a}{a}\le L\,.
\label{cond1}
\end{equation}
Using the Minkowski inequality, we get the bound
$$
\left[\mathcal E\Bigl(\int_{\R}f\,d\nu\Bigr)^2\right]^{\frac 12}
\le
\sum_a \left[\mathcal E\Bigl(\int_{\R}f_a\,d\nu\Bigr)^2\right]^{\frac 12}\,.
$$
Note now that we can write $\nu$ as 
$\nu_{a}*\nu^{(a)}$ where 
$\nu^{(a)}=*_{a'\ne a}\nu_{a'}$ is also a measure of total mass $1$. 
Then, denoting by $\nu_{a,t}$ the
shift of the measure $\nu_{a}$ by $t\in\mathbb R$ (so $\nu_{a,t}(E)=\nu_{a}(E-t)$),
and using the integral version of the Minkowski inequality, we get
\begin{multline*}
\left[\mathcal E\Bigl(\int_{\R}f_a\,d\nu\Bigr)^2\right]^{\frac 12}
=
\left[\mathcal E\Bigl(\int_{\R}\Bigl[\int_{\R}f_a\,d\nu_{a,t}\Bigr]\,d\nu^{(a)}(t)\Bigr)^2\right]^{\frac 12}
\\
\le
\int_{\R} \left[\mathcal E\Bigl(\int_{\R}f_a\,d\nu_{a,t}\Bigr)^2\right]^{\frac 12}\,d\nu^{(a)}(t)
=
\left[\mathcal E\Bigl(\int_{\R}f_a\,d\nu_{a}\Bigr)^2\right]^{\frac 12}
\le e^{-3cn_a}\sigma_a\,.
\end{multline*}
Here we used the fact that the Gaussian process $f_a$ is stationary, so the distribution of $\int_{\R}f_a\,d\nu_{a,t}$
does not depend on $t$.

Hence,
$$
\left[\mathcal E\Bigl(\int_{\R}f\,d\nu\Bigr)^2\right]^{\frac 12}
\le
\sum_a e^{-3cn_a}\sigma_a\le \max_a(e^{-2cn_a}\sigma_a)\,,
$$
provided that
\begin{equation}
\sum_a e^{-c n_a}\le 1\,.
\label{cond2}
\end{equation}
The maximum always exists because $\sigma_a$ tend to $0$ as $a\to\infty$ (recall that 
$\sigma_a^2\le\mu_a(\R)$ and $\sum_a\mu_a(\R)=\mu(\R)<+\infty$). We can also assume that
the maximum is strictly positive since, otherwise, the only chance for $f$ to be non-negative
on $\operatorname{supp}\nu$ is to be identically $0$ there and that event has zero probability.

Let $\al$ be the value of $a$ for which the maximum is attained. By the standard Gaussian tail estimate,
we have
$$
\mathcal P\left\{\int_{\R}f\,d\nu\ge\frac 12e^{-cn_{\al}}\sigma_{\al}\right\}\le 
\exp\left(-\frac {e^{2cn_{\al}}}8\right)\le e^{-\delta^2L^2}\,,
$$
provided that 
\begin{equation}
\min_a n_a\ge c''\delta L\text{ with some }c''>0
\label{cond3}
\end{equation}
and $\delta L$ is not too small.

Thus, it will suffice to bound the probability of the event $S$ that $f\ge 0$ on $[0,L]$ and
$\int_{\R}f\,d\nu\le \frac 12e^{-cn_{\al}}\sigma_{\al}$. 
%Let $\nu^{(\al)}=*_{a\ne \al}\nu_a$ so that 
%$\nu=\nu_{\al}*\nu^{(\al)}=\int_{\R}\nu_{\alpha,t}\,d\nu^{(\al)}(t)$ where $\nu_{\alpha,t}$ is the
%shift of the measure $\nu_{\al}$ by $t$, i.e., $\nu_{\alpha,t}(E)=\nu_{\al}(E-t)$.
For $t\in\R$, let $S_t$ be the event that $f\ge 0$ on $[0,L]$
and $\int_{\R}f\,d\nu_{\al,t}\le e^{-cn_{\al}}\sigma_{\al}$.
Writing $f=f_\al+\sum_{a\ne\al}f_a=f_\al+\varphi_\al$ and 
conditioning upon all $f_a$ with $a\ne\al$, we see that for every fixed $t\in\operatorname{supp}\nu^{(\al)}$,
the event $S_t$ is (conditionally) contained in the event that $f_{\al}+\varphi_{\al}\ge 0$ on 
$\operatorname{supp}\nu_{\al,t}$ and 
$\int_{\R}(f_{\al}+\varphi_{\al})\,d\nu_{\al,t}\le e^{-cn_{\al}}\sigma_{\al}$.
Thus, the probability of $S_t$ does not exceed $e^{-c'n_{\al}^2}$.

Now define $g(\omega)=\int_{\R}\chi_{S_t}(\om)\,d\nu^{(\al)}(t)$. Then, on the one hand,
$$
\mathcal E[g]\le \sup_{t\in\operatorname{supp}\nu^{(\al)}}\mathcal P\{S_t\}\le e^{-c'n_{\al}^2}\,.
$$
On the other hand, on $S$ we must have $g\ge \frac 12$ because otherwise 
$$
\nu^{(\al)}\left\{t:\int_{\R}(f_{\al}+\varphi_{\al})\,d\nu_{\al,t}> e^{-cn_{\al}}\sigma_{\al}\right\}>\frac 12$$ 
and
$$
\int_{\R}f\,d\nu= \int_{\R}\left[\int_{\R}(f_{\al}+\varphi_{\al})\,d\nu_{\al,t}\right]\,d\nu^{(\al)}(t)
>\frac 12e^{-cn_{\al}}\sigma_{\al}\,,
$$ 
which contradicts the second property in the definition of $S$.

This yields the bound
$$
\mathcal P\{S\}\le 2e^{-c'n_{\al}^2}\le e^{-c'''\delta^2L^2}\,
$$ 
provided that (\ref{cond3}) holds and $\delta L$ is not too small.

It remains to show that we can, indeed, choose $n_a$ satisfying (\ref{cond1}--\ref{cond3}).
We will just take a small $c''>0$ and, for $a=2^k\delta$, put $n_a=\lfloor c''2^{k/2}\delta L\rfloor$.
Then  
(\ref{cond1}) rewrites as
$$
\sum_{k\ge 2} c''2^{-k/2}\le 1\,,
$$
which can be ensured by an appropriate choice of $c''>0$,
while, 
(\ref{cond2}), (\ref{cond3}) and the inequality $\min_a n_a\ge n_0$ are satisfied as long as $\delta L$ is not too small. 

This finishes the proof of the desired bound in the continuous case. To handle the discrete case (stationary Gaussian processes on $\mathbb Z$ with spectral gap), it suffices to note that every such
 discrete process can be viewed as the restriction
of a continuous process with the spectral measure supported on $[-\frac 12,\frac 12]$ with the same spectral gap. 
If we assume 
that $\delta>0$ is a negative power of $2$ (which we can always do without loss of generality)
and restrict the dyadic decomposition in the 
argument above 
 to $a\le 1$, 
then all measures $\nu_a$ and their convolutions will be supported on $\mathbb Z$. 
 
\section{The sharpness of the bound}
\label{sharpness}

%\end{document} 
 
Now we will present a simple theorem that provides a wide class of spectral measures
for which the result of Theorem \ref{spectralgap} cannot be substantially improved.
\begin{theorem}
\label{lowerbound}
Let $f$ be any continuous stationary Gaussian process whose spectral measure $\mu$ is supported on
$[-R,R]$
and satisfies 
\begin{equation}
\rho_n^2=\inf\left\{\int_{\R}|P|^2\,d\mu\,:\,P(y)=1+\sum_{k=1}^n a_k y^k, a_k\in\mathbb C\right\}
\ge e^{-2Cn}\mu(\R)
\label{rho}
\end{equation}
for all $n\ge 1$ with some $C>0$. 
Then, for some $C'>0$, $L_0>0$ depending on $C$ only, we have 
$$
\mathcal P\{f\ge 0 \text{ on }[0,L]\}\ge e^{-C'R^2L^2}
$$
for all $L\ge L_0$.
\end{theorem}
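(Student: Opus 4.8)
The plan is to exhibit a single event of probability at least $e^{-C'R^2L^2}$ on which $f\ge 0$ throughout $[0,L]$, and the cleanest vehicle for this is a Cameron--Martin shift. Let $H$ denote the reproducing kernel (Cameron--Martin) space of $f$: it consists of the functions $v(x)=\int_\R e^{2\pi i xy}\overline{\psi(y)}\,d\mu(y)$ with $\|v\|_H^2=\int_\R|\psi|^2\,d\mu<\infty$, so each $v\in H$ is entire of exponential type at most $2\pi R$ and, crucially, has its spectrum inside $\operatorname{supp}\mu$. For a centered Gaussian measure the shift inequality
$$
\mathcal P\{f\in v+B\}\ge e^{-\frac12\|v\|_H^2}\,\mathcal P\{f\in B\}
$$
holds for every $v\in H$ and every symmetric convex $B$; it follows from the Cameron--Martin change of variables, Jensen's inequality, and the fact that the linear term integrates to zero over the symmetric set $B$. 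I would apply this with $B=\{g:\sup_{[0,L]}|g|<r\}$ and a fixed $v\in H$ satisfying $v\ge r$ on $[0,L]$, since then $v+B\subseteq\{f\ge0\text{ on }[0,L]\}$. The estimate then factors into two tasks: (i) a small-ball bound $\mathcal P\{\sup_{[0,L]}|f|<r\}\ge e^{-C_1RL}$ for $r$ a fixed multiple of $\sqrt{\mu(\R)}$; and (ii) the construction of $v\in H$ with $v\ge r$ on $[0,L]$ and $\|v\|_H^2\le C_2R^2L^2$. Granting both, the product is at least $e^{-\frac12 C_2R^2L^2-C_1RL}\ge e^{-C'R^2L^2}$ once $L\ge L_0$, because for large $L$ the linear term is swallowed by the quadratic one.

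Task (i) is the easy, subdominant one and uses only that $\operatorname{supp}\mu\subseteq[-R,R]$: the restriction of $f$ to $[0,L]$ is determined up to a Bernstein error by its values at $\asymp RL$ Nyquist-spaced points, the metric entropy of the relevant class of band-limited functions at scale $r\asymp\sqrt{\mu(\R)}$ is $\asymp RL$, and the standard small-ball lower bound for Gaussian measures then gives $\mathcal P\{\sup_{[0,L]}|f|<r\}\ge e^{-C_1RL}$.

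The hypothesis (\ref{rho}) is needed only for task (ii), which is the heart of the argument. Prescribing $v$ to equal $r$ at nodes $t_0,\dots,t_m\in[0,L]$ taken at Nyquist spacing $\asymp 1/R$ (so $m\asymp RL$), the least achievable norm is the quadratic form
$$
\min\{\|v\|_H^2: v(t_j)=r\ \forall j\}=r^2\,\mathbf 1^{\top}K^{-1}\mathbf 1,\qquad K=\big(k(t_i-t_j)\big)_{i,j},
$$
and a Bernstein/Markov argument upgrades ``$v=r$ at the nodes'' to ``$v\ge r/2$ on all of $[0,L]$''. Thus (ii) reduces to the bound $\mathbf 1^{\top}K^{-1}\mathbf 1\le C_2R^2L^2/\mu(\R)$, equivalently to constructing a $g\in H$ with $g(t_j)=1$ for all $j$ and $\|g\|_H^2\le C_2R^2L^2/\mu(\R)$. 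The difficulty is precisely that $g$ must keep its spectrum inside $\operatorname{supp}\mu$: for a measure with a spectral gap one cannot simply take a cardinal-series interpolant, whose spectrum would fill the gap and give infinite $H$-norm. This is where (\ref{rho}) enters: its extremal polynomials provide, on the local scale $1/R$, band-limited building blocks whose spectra respect $\operatorname{supp}\mu$ and whose $H$-norms are controlled in terms of $C$, and one assembles $g$ from $\asymp RL$ translates of these blocks across $[0,L]$.

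I expect (ii) --- turning the merely exponential-in-degree lower bounds (\ref{rho}) into an interpolant of the correct, polynomial norm $\asymp R^2L^2$, with constant depending on $C$ alone --- to be the main obstacle. The crude estimate $\mathbf 1^{\top}K^{-1}\mathbf 1\le (m+1)/\lambda_{\min}(K)$ is useless here, since (\ref{rho}) bounds $\lambda_{\min}(K)$ only from below by an exponentially small multiple of $\mu(\R)$; one must instead exploit the full structure of $K$, assembling the interpolant by hand so that the $\asymp (RL)^2$ pairwise interactions of the blocks sum to $\asymp R^2L^2$ rather than blowing up. Everything else --- the shift inequality, the small-ball estimate, and the passage between the continuous statement and the sampled/polynomial formulation --- is routine by comparison.
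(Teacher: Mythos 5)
Your plan founders at exactly the step you flagged as the heart of the matter: task (ii) is not just hard, it is provably impossible for the measures this theorem is designed for, namely those satisfying (\ref{rho}) \emph{and} having a spectral gap (such as the discrete example at the end of Section \ref{sharpness}). The obstruction is weak duality. If $v\in H$ satisfies $v\ge r$ on $[0,L]$, then for \emph{every} probability measure $\nu$ on $[0,L]$ one has $r\le \int_\R v\,d\nu\le \|v\|_H\bigl(\mathcal E[(\int_\R f\,d\nu)^2]\bigr)^{1/2}$. But Lemma \ref{mainlem} together with Remark \ref{rema} produces, for any gap measure, a probability measure $\nu$ supported in $[0,L]$ with $\mathcal E[(\int_\R f\,d\nu)^2]\le e^{-cL}\mu(\R)$ --- this is the very engine of the upper bound in Theorem \ref{spectralgap}. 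Hence every admissible $v$ with $r\asymp\sqrt{\mu(\R)}$ has $\|v\|_H^2\ge r^2e^{cL}/\mu(\R)\gtrsim e^{cL}$, and the Cameron--Martin bound degenerates to $e^{-\frac12 e^{cL}}$, hopelessly far from $e^{-C'R^2L^2}$. In matrix form, your target $\mathbf 1^{\top}K^{-1}\mathbf 1\le C_2R^2L^2/\mu(\R)$ fails because $\mathbf 1^{\top}K^{-1}\mathbf 1=1/\min\{\beta^{\top}K\beta:\sum_j\beta_j=1\}\ge e^{cn}/\mu(\R)$ by the same lemma (whose minimizing $\beta_k$ are even non-negative). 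A related confusion: (\ref{rho}) concerns polynomials in the \emph{frequency} variable $y$, i.e., non-degeneracy of linear combinations of the derivatives $f^{(k)}(0)$ at a single point; it says nothing useful about the sampled covariance $K=(k(t_i-t_j))$, which for gap measures is exponentially degenerate in positive directions. (Your small-ball step (i) is fine up to logarithmic corrections, and subdominant as you say, but it never gets to be used.)

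The paper's proof evades this by abandoning the idea of keeping $f$ at its typical size: on its positivity event, $f$ is non-negative but uniformly \emph{exponentially small} on $[0,L]$. Expanding $f$ in a Taylor series at $0$, one asks that $|f(0)|>a=\rho_K\ge e^{-CK}$ while $|f^{(k)}(0)|\le a_k$ with $\sum_{k\ge1}a_kL^k/k!\le a$, so that the value at the origin dominates the rest of the series on $[0,L]$. Condition (\ref{rho}) enters precisely to guarantee that $f(0)$ contains a component $\rho_K g$ independent of $(f'(0),\dots,f^{(K)}(0))$, so the conditional probability of $|f(0)|>\rho_K$ stays bounded below; pinning the $K\asymp L$ derivatives into windows of width $\alpha\asymp e^{-CK-L}$ costs $(\alpha/3K)^K=e^{-O(K^2)}=e^{-O(L^2)}$, and Gaussian tails dispose of $k>K$. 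If you wanted to salvage a shift-based argument you would be forced to take $r$ exponentially small in $L$, at which point the small-ball estimate at scale $e^{-cL}$ becomes the entire difficulty and, when unwound, essentially reproduces this derivative-pinning computation.
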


Note that this requirement is compatible with the spectral gap condition, i.e., there exist compactly
supported measures $\mu$ with zero mass on some interval around the origin such that
(\ref{rho}) holds. For instance,
the classical Remez theorem (see \cite{CW}, Lemma $4$) shows that the inequality 
(\ref{rho}) is satisfied for every measure $\mu$ of total mass $1$ that 
has a non-trivial absolutely continuous component. This observation immediately yields the following 
\begin{corollary}
Let $f$ be any continuous stationary Gaussian process whose spectral measure $\mu$ is compactly
supported and has a non-trivial absolutely continuous component.
Then, for some $C'>0$, $L_0>0$, we have 
$$
\mathcal P\{f\ge 0 \text{ on }[0,L]\}\ge e^{-C'L^2}
$$
for all $L\ge L_0$.
\end{corollary}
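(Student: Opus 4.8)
The plan is to obtain the corollary as a direct consequence of Theorem \ref{lowerbound}. Since $\mu$ is compactly supported and fixed, I first fix $R>0$ with $\operatorname{supp}\mu\subset[-R,R]$; then the factor $R^2$ appearing in the exponent of Theorem \ref{lowerbound} is merely a constant that I absorb into $C'$. Thus the whole problem reduces to verifying the hypothesis (\ref{rho}) for our $\mu$, that is, to producing a constant $C>0$ (allowed to depend on $\mu$) for which $\rho_n^2\ge e^{-2Cn}\mu(\R)$ holds for every $n\ge 1$.

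To verify (\ref{rho}) I would use the non-trivial absolutely continuous component of $\mu$: by definition it provides a measurable set $E\subset[-R,R]$ with $|E|>0$ together with a number $\eta>0$ such that $d\mu\ge\eta\,dx$ on $E$. Consequently, for any competitor $P(y)=1+\sum_{k=1}^n a_k y^k$ one has $\int_\R|P|^2\,d\mu\ge\eta\int_E|P|^2\,dx$, so it suffices to bound $\int_E|P|^2\,dx$ from below. The argument has two steps. First, a Chebyshev--Markov truncation: writing $\rho^2=\int_E|P|^2\,dx$ and $M=\rho\sqrt{2/|E|}$, the set $E'=\{x\in E:|P(x)|\le M\}$ satisfies $|E'|\ge\tfrac12|E|$. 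Second, the Remez inequality on $[-R,R]$ applied to the subset $E'$, whose measure fraction $\theta=|E'|/(2R)$ is bounded below by $|E|/(4R)$: it gives $\sup_{[-R,R]}|P|\le\gamma^n\sup_{E'}|P|\le\gamma^n M$, where $\gamma=\gamma(\theta)>1$ depends only on $|E|$ and $R$ and, crucially, not on $n$. Evaluating at $0\in[-R,R]$, where $P(0)=1$, yields $1\le\gamma^n M$, hence $\rho^2\ge\tfrac12|E|\,\gamma^{-2n}$. Multiplying by $\eta$ and comparing with $\mu(\R)$ then gives (\ref{rho}) with a suitable $C$ depending only on $\gamma$, $\eta$, $|E|$ and $\mu(\R)$.

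Since the competitors $P$ have complex coefficients while the Remez inequality is usually stated for real polynomials, I would handle this by a rotation: at a point $x^\ast$ where $|P|$ attains its maximum on $[-R,R]$, write $P(x^\ast)=|P(x^\ast)|e^{i\phi}$ and apply the real Remez inequality to $\operatorname{Re}(e^{-i\phi}P)$, a real polynomial of degree $\le n$ whose sup-norm on $[-R,R]$ coincides with that of $|P|$ and which is dominated by $|P|$ pointwise.

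The hard part will be the second step, namely extracting a lower bound for $\rho_n^2$ of precisely the exponential shape $e^{-2Cn}$ demanded by (\ref{rho}). The Remez inequality is exactly the tool that converts ``small on a set of positive measure'' into ``small everywhere, up to a factor $\gamma^n$'', and the essential point is that $\gamma$ depends only on the measure fraction of the good set and not on the degree $n$, so that the resulting bound has the clean form required. Everything else---the Markov truncation, the complex-to-real reduction, and the absorption of $R^2$ and of the fixed constant $\eta|E|/(2\mu(\R))$ into $C$ and then into $C'$---is routine bookkeeping.
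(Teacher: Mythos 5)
Your proposal is correct and follows exactly the paper's route: the paper also deduces the corollary from Theorem \ref{lowerbound} by observing that the classical Remez theorem (cited there via \cite{CW}, Lemma 4) yields condition (\ref{rho}) for any measure with a non-trivial absolutely continuous component. Your only difference is that you spell out the Remez-based verification (measure truncation, degree-independent constant $\gamma(\theta)$, and the complex-to-real reduction) rather than citing it, and these details are sound.
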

In general, the compact support condition in the Corollary cannot be removed if one
wants to preserve the conclusion in the current form
(see \cite{FFN}, Corollary 5, for an example of a continuous stationary Gaussian process 
whose spectral measure is absolutely continuous and for which
$\mathcal P\{f\ge 0 \text{ on }[0,L]\}\le e^{-e^{cL}}$ for large $L$).
However, having a non-trivial absolutely continuous part is by no means necessary for the conditions
of Theorem \ref{lowerbound} to be satisfied. At the end of this section we will present a purely
discrete measure $\mu$ that has a spectral gap around the origin and still satisfies (\ref{rho}).

\begin{proof}
Since $\mu$ is compactly supported, the Gaussian process $f$ 
represents a random entire function of exponential type and we have 
the Taylor series decomposition
$$
f(x)=\sum_{k\ge 0} \frac 1{k!}f^{(k)}(0)x^k
$$
converging almost surely on the entire real line.

Fix some $L>0$. 
We will certainly have $f$ preserving sign on $[0,L]$ 
if we can find some positive numbers $a,a_1,a_2,\dots$ such that
$a\ge \sum_{k\ge 1}a_k\frac{L^k}{k!}$, $|f(0)|> a$, and $|f^{(k)}(0)|\le a_k$ for all $k\ge 1$. 
Since $f$ and $-f$ are equidistributed, we have $f\ge 0$ on one half (with respect to the probability
measure) of that event.
In what follows, we  assume without loss of generality that $R=\frac 1{2\pi}$, $\mu(\R)=1$.

When choosing $a$ and $a_k$, it will be convenient to take care of ``small'' and ``large'' $k$ separately.
So, fix some big $K\ge 1$ (it will end up being a constant multiple of $L$ for large $L$) and
consider first the derivatives $f^{(k)}(0)$ with $k\le K$. 

Condition (\ref{rho}) shows, in particular, that
the Gaussian random variable $f(0)$ cannot be completely determined by the vector $F=(f^{(1)}(0),\dots,f^{(K)}(0))$.
More precisely, we can represent $f(0)$ as $\rho_K g+AF$ where $g$ is the standard real Gaussian random variable
independent of $F$ and $A$ is some linear mapping from $\R^K$ to $\R$. 

Indeed, let $A=(a_1,\dots,a_K)\in \mathbb C^K$ 
be a minimizer of
$$
\mathcal E[|f(0)-AF|^2]=\int_{\mathbb R}\Bigl|1-\sum_{k=1}^K (2\pi i)^k a_k y^k\Bigr|^2\,d\mu(y)\,.
$$  
Note that the expression on the right hand side of this identity shows that the minimization problem we are talking about
is just the problem of finding the closest point to the constant function $1$ in the finite-dimensional subspace
of the complex Hilbert space $L^2(\mu)$ spanned by $y^k$ ($k=1,\dots,K$), so it always has a solution, which
is just the orthogonal projection of $1$ to that subspace. 

On the other hand, the expression on left hand side shows 
that the minimizing vector $A$ can always be taken real (since $f(0),f^{(k)}(0)$ are all real Gaussian random
variables, removing the imaginary part does not increase the functional), and,  
for this choice of a minimizing vector, $f(0)-AF$ is a real Gaussian random variable satisfying 
$\mathcal E[(f(0)-AF)f^{(k)}(0)]=0$ for all $k=1,\dots,K$. By the special properties of jointly
Gaussian random variables, it means that $f(0)-AF$ is independent of $F$. At last, by the definition 
of $\rho_K$ (see (\ref{rho})), we get 
$$
\mathcal E[(f(0)-AF)^2]=\rho_K^2
$$
so we can write $f(0)-AF$ as $\rho_K$ times a standard real Gaussian $g$.

Since we want to impose the restriction that $F$ is small, our only chance to get $f(0)$ reasonably large 
is to use the $\rho_K g$ component of this decomposition, which dictates the choice $a=\rho_K\ge e^{-CK}$.
We will also put $a_k=\alpha>0$ for $k=1,\dots, K$. Since we need to ensure that
$a\ge \sum_{k\ge 1}a_k\frac{L^k}{k!}$ in the end, we choose $\alpha=\frac a2e^{-L}$,
so that
$$
\sum_{k=1}^K\frac{L^k}{k!}a_k< e^L\al=  \frac a2\,.
$$

Our next goal will be to estimate from below the probability that $|f(0)|>a$ while $|f^{(k)}(0)|<\al$ for all $k=1,\dots,K$.
Since 
$$
\mathcal{E}[f^{(k)}(0)^2]=(2\pi)^{2k}\int_{\R} y^{2k}\,d\mu(y)\le 1\,,
$$
for all $k$, the diagonal elements of the covariance matrix of $F$ are
bounded by $1$. Therefore its norm is at most $K$, whence $F$ can be written as $BG$ where $B:\R^K\to\R^K$ is a linear transformation of norm at most $\sqrt K$ and $G=(g_1,\dots,g_K)$ is the standard Gaussian vector in $\R^K$. 

Thus, denoting the Euclidean norm of the vector $F$ by $|F|$, as usual, and observing that $\alpha<1$, we have 
\begin{multline*}
\mathcal P\{|f^{(k)}(0)|\le \alpha\text{ for all }k=1,\dots,K\}
\\
\ge
\mathcal P\{|F|\le \al\}\ge \mathcal P\{|G|<K^{-\frac 12}\al\}\ge 
\mathcal P\{|g_k|<K^{-1}\al\text{ for all }k=1,\dots,K\}
\\
\ge 
\left(\frac{2\al e^{-\frac 12K^{-2}\al^2}}{\sqrt{2\pi}\,K}\right)^K 
\ge \left(\frac{\al}{3K}\right)^K\,, 
\end{multline*}
say.

Conditioning upon $F$, we see that
for every $\bar F\in\R^K$,
$$
%\mathcal P\{|f(0)|>a\mid F=\bar F\}=
\mathcal P\{|f(0)|>\rho_K\mid F=\bar F\}
\ge\inf_{t\in\R}\mathcal P\{|\rho_K g+t|>\rho_K\}=\mathcal P\{|g|>1\}=p_0>0
$$
(here we used the fact that for every even unimodal probability density function $\gamma$, the integral
of $\gamma$ over an interval of fixed length is maximized by the interval centered at the origin).
Therefore
\begin{multline*}
\mathcal P\{|f(0)|>a;\ |f^{(k)}(0)|\le a_k\text{ for all }k=1,\dots,K\}
\\
=\mathcal P\{|f(0)|>\rho_K;\ |f^{(k)}(0)|\le \alpha \text{ for all }k=1,\dots,K\}
\\
\ge 
p_0\left(\frac{\al}{3K}\right)^K\,.
\end{multline*}
Plugging in the value $\alpha=\frac a2e^{-L}=\frac 12\rho_K e^{-L}\ge \frac 12e^{-CK-L}$ (recall that 
by (\ref{rho}) we have $\rho_K\ge e^{-CK}$), we 
get the lower bound
$$
p_0\left(\frac{\al}{3K}\right)^K\ge p_0\left(\frac{e^{-CK-L}}{6K}\right)^K\,.
$$
For $K\ge L$, the right hand side of the last inequality is at least $e^{-\wt CK^2}$
with $\wt C=C+7+p_0^{-1}$, say.

Now let us take care of $k> K$.
We have already seen that 
$$
\mathcal{E}[f^{(k)}(0)^2]\le 1\,,
$$
so we have the standard Gaussian tail bound
$$
\mathcal P\{|f^{(k)}(0)|>a_k\}\le e^{-\frac12 a_k^2}\,.
$$
Thus, for any choice of $a_k>0$, the probability that the inequality $|f^{(k)}(0)|\le a_k$ is violated for some $k>K$
is at most $\sum_{k>K}e^{-\frac 12a_k^2}$ and, therefore,
\begin{multline*}
\mathcal P\{|f(0)|>a;\ |f^{(k)}(0)|\le a_k\text{ for all }k\ge 1\}
\\
=
\mathcal P\{|f(0)|>a;\ |f^{(k)}(0)|\le a_k\text{ for all }k=1,\dots,K;\text{ and }
|f^{(k)}(0)|\le a_k\text{ for }k>K\}
\\
\ge 
e^{-\wt CK^2}-\sum_{k>K}e^{-\frac 12 a_k^2}\,.
\end{multline*}

We want to make sure that $\sum_{k>K}e^{-\frac 12a_k^2}$ stays
well below $e^{-\wt CK^2}$ so that the subtraction in the probability estimate is harmless. 
It can be achieved, say, by putting $a_k=\sqrt{2(\wt CK^2+k)}$ for $k>K$, in which case the sum in question is bounded by
$e^{-\wt CK^2}\sum_{k>K}e^{-k}\le \frac 12e^{-\wt CK^2}$, so we still have the lower bound   
$\frac 12e^{-\wt CK^2}$ for the probability of the event we are interested in. 

Since we have already ensured that $\sum_{1\le k\le K}\frac{L^k}{k!}a_k<\frac a2$, it remains to choose $K$ 
so that $\sum_{k>K}\frac{L^k}{k!}a_k\le \frac a2$ as well. Recalling that
$a=\rho_K\ge e^{-CK}$, we see that it will suffice to ensure that
$$
\sum_{k>K}\frac{L^k}{k!}\sqrt{2(\wt CK^2+k)}\le \frac 12 e^{-CK}\,.
$$
The classical bound $k!\ge \left(\frac ke\right)^k$  and the inequality
$$
\sqrt{2(\wt CK^2+k)}\le \sqrt{2(\wt Ck^2+k)}\le 2\wt Ck\le e^{2\wt C k}
$$
imply that the left hand side is at most 
$$
\sum_{k>K}\left(\frac{Le^{2\wt C+1}}{K}\right)^k\,.
$$
If $K\ge 2e^{2\wt C+1}L$ (i.e., the common ratio of this geometric progression is at most $\frac 12$), then the
sum converges and does not exceed the term of the progression corresponding to $k=K$, which is
$\left(\frac{Le^{2\wt C+1}}{K}\right)^K$.
To ensure that it is less than $\frac 12e^{-CK}>\left(e^{-C-1}\right)^K$, it is enough 
to choose $K$ such that
$$
\frac{Le^{2\wt C+1}}{K}\le e^{-C-1}\,,
$$
i.e.,
$
K\ge e^{C+2\wt C+2}L\,.
$
Compared to all the previous conditions imposed on $K$ ($K\ge 1$, $K\ge L$, $K\ge 2e^{2\wt C+1}L$),  this one is the 
most restrictive for $L\ge 1$ but it still allows one to choose $K$ below or at some fixed constant multiple of $L\ge1$, so the desired estimate follows.
\end{proof}

Now, it remains to present a symmetric discrete measure
$\mu$ with a spectral gap that satisfies the assumptions of Theorem \ref{lowerbound}. We 
will merely take for $\mu$ the measure whose restriction to $[0,+\infty)$ is 
$\sum_{n\ge 2}\frac{1}{n2^n}\sum_{k=1}^n\delta_{\frac {n+k}{4\pi n}}$.
It is supported on $[-\frac 1{2\pi}\frac 1{2\pi}]\setminus[-\frac 1{4\pi},\frac 1{4\pi}]$
and satisfies (\ref{rho}) just by the Lagrange interpolation formula
\begin{multline*}
1=P(0)=\sum_{k=1}^{n+1}\Bigl[\prod_{1\le j\le n+1, j\ne k}\frac{0-\frac{n+1+j}{4\pi(n+1)}}{\frac{k-j}{4\pi(n+1)}}\Bigr]
P\left(\frac{n+1+k}{4\pi(n+1)}\right)
\\
\le \sum_{k=1}^{n+1}\frac {(2n+2)!}{(k-1)!(n+1-k)!(n+1)!}\left|P\left(\frac{n+1+k}{4\pi(n+1)}\right)\right|
\\
\le 3^{2n+1}(2n+2)\sum_{k=1}^{n+1}\left|P\left(\frac{n+1+k}{4\pi(n+1)}\right)\right|
\\
\le 3^{2n+1}2^{n+2}(n+1)^2\int_{\R}|P|\,d\mu\le 10^{3n}\sqrt{\int_{\R}|P|^2\,d\mu}\,.
\end{multline*}

\end{document}